\def\RR{\mathbb{R}}
\def\PV{\mathrm {PV}\,}
\def\supp{\mathrm {supp}\,}
\def\vphi{\varphi}
\def\eps{\varepsilon}
\def\ds{\displaystyle}
\newtheorem{theorem}{Theorem}[section]
\newtheorem{lemma}[theorem]{Lemma}
\newtheorem{proposition}[theorem]{Proposition}
\def\qed{\hbox{${\vcenter{\vbox{                        
   \hrule height 0.4pt\hbox{\vrule width 0.4pt height 6pt
   \kern5pt\vrule width 0.4pt}\hrule height 0.4pt}}}$}}
\begin{document}

\title[Non local fronts]{Existence and asymptotics of fronts in non local combustion models}
\author{Antoine Mellet, Jean-Michel Roquejoffre and Yannick Sire}
\date{}
\maketitle

\begin{abstract}
We prove the existence and provide the asymptotics for non local fronts in homogeneous media.  
\end{abstract}

\tableofcontents

\section{Introduction}

This paper is devoted to the study of fronts propagation in homogeneous media for a fractional reaction-diffusion equation appearing in combustion theory.
More precisely, we consider the following classical scalar model for the combustion of premixed gas 
with ignition temperature:
\begin{equation} \label{eq:1}
u_t + (-\partial_{xx})^\alpha u = f(u) \quad  \mbox{ in } \RR\times \RR,
\end{equation}
where the function $f$ satisfies:
\begin{equation}\label{eq:beta}
\left\{
\begin{array}{l}
f:\RR\rightarrow \RR \mbox{ continuous function} \\[5pt]
f(u) \geq 0 \mbox{ for all } u\in\RR\mbox{ and }\supp f = [\theta,1] \\[5pt]
f'(1)<0 
\end{array}
\right.
\end{equation}
where $\theta\in(0,1)$ is a fixed number (usually referred to as the ignition temperature).
\medskip

The operator $(-\partial_{xx})^\alpha$ denotes the fractional power of the Laplace operator in one dimension (with $\alpha\in(0,1]$). It can be defined by the following singular
integral 
\begin{equation}\label{eq:lapa}
(-\partial_{xx})^\alpha u(x)=c_{\alpha}\, \PV \int_{\mathbb{R}} \frac{u(x)-u(z)}{|x-z|^{1+2\alpha}}\,dz
\end{equation}
where $\PV$ stands for the Cauchy principal
value. 
This integral is well defined, for instance, if $u$ belongs to $C^2(\mathbb R)$ and satisfies
$$\int_\mathbb R \frac{|u(x)|}{(1+|x|)^{1+2\alpha}}\,dx < +\infty $$
(in particular, smooth bounded functions are admissible). 
Alternatively, the fractional Laplace operator can be defined as a
pseudo-differential operator with symbol $|\xi|^{2\alpha}$. We
refer the reader to the book by Landkof where an extensive study of
$(-\partial_{xx})^\alpha$ is performed by means of harmonic analysis techniques (see \cite{landkof}). 
\vspace{10pt}

In this paper, we will always take $\alpha \in (1/2, 1]$, and we are interested in particular solutions of (\ref{eq:1}) which describe transition fronts between the stationary states $0$ and $1$ (traveling fronts).
These traveling fronts are solutions of (\ref{eq:1}) that are of the form
\begin{equation}\label{eq:TW}
u(t,x) = \phi(x+ct)
\end{equation}
with
$$\left\{
\begin{array}{l}
\ds \lim_{x\rightarrow-\infty}\phi(x) = 0 \\[3pt]
\ds \lim_{x\rightarrow+\infty}\phi(x) = 1 .
\end{array}
\right.
$$
The number $c$ is the speed of propagation of the front. It is readily seen that $\phi$ must solve
$$
 (-\partial_{xx})^\alpha \phi + c \, \phi' =f(\phi) \qquad \mbox{ for all }x\in\RR
$$

\vspace{10pt}

When $\alpha=1$ (standard Laplace operator), it is well known that there exists a unique speed $c$ and a unique profile $\phi$ (up to translation) that correspond to a traveling front solution of (\ref{eq:1}) (see e.g. \cite{BLL,BN,BNS}). 
The goal of this paper is to generalize these results to the case $\alpha\in (1/2, 1)$.
We are thus looking for  $\phi$ and $c$ satisfying
\begin{equation} \label{1DwaveNL}
\left\{
\begin{array}{l}
 (-\partial_{xx})^\alpha \phi + c\,  \phi' =f(\phi) \qquad \mbox{ for all }x\in\RR\\[3pt]
\ds \lim_{x\rightarrow-\infty}\phi(x) = 0 \\[3pt]
\ds \lim_{x\rightarrow+\infty}\phi(x) = 1 \\[3pt]
\phi(0)=\theta
\end{array}
\right.
\end{equation}
(the last condition is a normalization condition which ensures the uniqueness of $\phi$).
Our main theorem is the following:
\begin{theorem}\label{thm:exist}
Let $\alpha \in (1/2,1)$ and assume that $f$ satisfies (\ref{eq:beta}), then
there exists a unique pair  $(\phi_0,c_0)$ solution of \eqref{1DwaveNL}. 
Furthermore, $c_0>0$ and  $\phi_0$ is monotone increasing.
\end{theorem}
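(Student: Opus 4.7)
The natural strategy is to adapt the Berestycki--Nirenberg scheme to the nonlocal setting. I would construct solutions of a family of truncated problems on bounded intervals $[-a,a]$ with appropriate exterior data, select the correct speed on each interval via the normalization $\phi(0)=\theta$, and then pass to the limit $a\to+\infty$ using uniform a priori estimates.

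\textbf{Step 1: The truncated problem.} For each $a>0$ I look for $(\phi,c)$ satisfying $(-\pa_{xx})^\alpha\phi+c\phi'=f(\phi)$ on $(-a,a)$, together with $\phi\equiv 0$ on $(-\infty,-a]$ and $\phi\equiv 1$ on $[a,+\infty)$, so that the singular integral defining $(-\pa_{xx})^\alpha \phi$ is well posed. For fixed $c\in\RR$, monotone iteration between the ordered sub- and super-solutions $0$ and $1$, combined with the comparison principle for $(-\pa_{xx})^\alpha$, produces a solution $\phi_{a,c}\in[0,1]$; a sliding argument on $(-a,a)$ yields its monotonicity in $x$. The map $c\mapsto \phi_{a,c}(0)$ is continuous, and one can check that $\phi_{a,c}(0)\to 0$ as $c\to+\infty$ and $\phi_{a,c}(0)\to 1$ as $c\to-\infty$, so the intermediate value theorem delivers a speed $c_a$ with $\phi_{a,c_a}(0)=\theta$.

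\textbf{Step 2: Uniform estimates and passage to the limit.} The essential quantitative estimate is a two-sided bound $0<c_*\le c_a\le c^*$. I would derive it from an energy identity: formally multiplying the equation by $\phi_a'$ and exploiting the translation-invariance of the fractional Dirichlet form gives
\[
c_a \int_\RR (\phi_a')^2\, dx \;=\; \int_0^1 f(s)\, ds \;+\; o_a(1),
\]
which, since $f\ge 0$ and $f\not\equiv 0$, forces $c_a\ge c_*>0$ and simultaneously yields a uniform $H^\alpha$-type bound on $\phi_a$. Interior Schauder-type estimates for $(-\pa_{xx})^\alpha$ then provide compactness in $C^1_{\mathrm{loc}}$, and a subsequence converges to a pair $(\phi_0,c_0)$ solving the equation on $\RR$ with $\phi_0(0)=\theta$ and $\phi_0$ nondecreasing. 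The conditions $\phi_0(\pm\infty)\in\{0,1\}$ are then recovered from monotonicity and the structure of $f$, using the strict positivity of $c_0$ to exclude trivial constant limits.

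\textbf{Step 3: Monotonicity and uniqueness.} Strict monotonicity of $\phi_0$ follows from the strong maximum principle for $(-\pa_{xx})^\alpha$ applied to the nonnegative function $\phi_0'$, which solves the linearization of the equation. Uniqueness of $(c_0,\phi_0)$ is obtained by the sliding method: given another solution $(\tv,\tilde c)$, I translate $\tv$ far to the right so that $\tv(\cdot-\tau)\ge\phi_0$, decrease $\tau$ down to the infimum at which this inequality still holds, and use the strong maximum principle together with the normalization $\phi_0(0)=\tv(0)=\theta$ to conclude $\tau=0$ and $\tilde c=c_0$.

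\textbf{Main obstacle.} The delicate point is Step 2, and in particular the justification of the energy identity. A front going from $0$ to $1$ has infinite nonlocal Dirichlet energy, so the formal integration by parts that is supposed to kill $\int (-\pa_{xx})^\alpha\phi\cdot\phi'\,dx$ requires a careful cut-off and limiting procedure. A related difficulty is that solutions of the fractional equation decay only \emph{algebraically} at the unstable end, at rate $|x|^{-2\alpha}$, rather than exponentially as in the local case; controlling these tails uniformly in $a$ is what ensures that no mass escapes to $\pm\infty$ in the limit and that the sliding arguments close cleanly, and the same tail analysis is presumably the key input for the asymptotics promised in the abstract.
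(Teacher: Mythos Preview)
Your overall architecture --- truncate, select the speed via $\phi(0)=\theta$, extract a limit, then recover the end-states and prove uniqueness by sliding --- is exactly the paper's. The gap is in Step~2, where you derive the speed bounds from an energy identity. Even granting
\[
c_a\int_\RR(\phi_a')^2\,dx=\int_0^1 f(s)\,ds+o_a(1),
\]
this gives only the \emph{sign} of $c_a$, not a uniform lower bound $c_*>0$: for that you would need $\int(\phi_a')^2\le M$ uniformly in $a$, and the identity does not provide this (your claim that it ``simultaneously yields a uniform $H^\alpha$-type bound'' is circular, since that bound requires $c_a\ge c_*$ first). The upper bound $c_a\le c^*$ likewise needs $\int(\phi_a')^2\ge\delta>0$ uniformly, which is not automatic either. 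On top of this, on the truncated problem $\phi_a$ is only $C^\alpha$ up to $\pm a$ and the nonlocal cross-term $\int[(-\partial_{xx})^\alpha\phi_a]\,\phi_a'$ carries genuine boundary contributions, so even the formal identity is not justified by the argument you sketch.

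The paper avoids both issues. The two-sided bound $|c_b|\le K$ comes not from energy but from explicit \emph{polynomial-tail barriers}: one computes $(-\partial_{xx})^\alpha$ on the profile $\vphi(x)=|x|^{-(2\alpha-1)}$ for $x<-1$, $\vphi=1$ for $x\ge-1$, and shows that for $c$ large it is a supersolution of the truncated problem, forcing $\phi_{b,c}(0)<\theta$; the reflected function $1-\vphi(-x)$ handles $c\ll 0$. This barrier computation is exactly the ``tail analysis'' you relegate to the obstacle paragraph --- in the paper it is the main engine of the existence proof, not just of the asymptotics. Positivity of the \emph{limiting} speed is then obtained, not uniformly along the truncations, by integrating the equation itself (no multiplication by $\phi'$): one proves directly that $\int_\RR(-\partial_{xx})^\alpha\phi_0\,dx=0$, so that $c_0(\gamma_1-\gamma_0)=\int_\RR f(\phi_0)\,dx$, and a translated-limit argument around a point where the profile equals $\tfrac{1+\theta}{2}$ makes the right-hand side strictly positive. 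Finally, once $c_0>0$, the same polynomial supersolution, rescaled as $\vphi_\eps(x)=\vphi(\eps x)$, forces $\phi_0(x)\to 0$ at $-\infty$ and gives the decay rate in Theorem~\ref{thm:asymp}.
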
   

We will also obtain the following result, which describes the asymptotic behavior of the front at $-\infty$:
\begin{theorem}\label{thm:asymp}
Let $\alpha \in (1/2,1)$ and assume that $f$ satisfies (\ref{eq:beta}). Let $\phi_0$ be the unique solution of \eqref{1DwaveNL} provided by Theorem \ref{thm:exist}. 
Then there exist $m,M$ such that
$$  \phi_0(x)\leq \frac{M}{|x|^{2\alpha-1}}\quad \mbox{ for } x\leq -1$$
and 
$$ \phi_0'(x) \geq \frac{m}{|x|^{2\alpha}} \qquad \mbox{ for } x\leq -1.$$
\end{theorem}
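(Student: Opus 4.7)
The plan rests on the observation that, by monotonicity of $\phi_0$ and the normalization $\phi_0(0)=\theta$, one has $\phi_0(x)\le \theta$ on $(-\infty,0]$, so $f(\phi_0(x))=0$ there. Consequently $\phi_0$ satisfies the \emph{linear} nonlocal equation
$$(-\partial_{xx})^\alpha \phi_0 + c_0\phi_0'=0\quad\text{on }(-\infty,0].$$
Both estimates are then derived from this relation together with the global information that $\phi_0\to 1$ at $+\infty$ (which enters through the nonlocal term).

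\textbf{Lower bound.} From the linear equation, for $x\le 0$,
$$c_0\phi_0'(x)=-(-\partial_{xx})^\alpha\phi_0(x)=c_\alpha\int_{\RR}\frac{\phi_0(z)-\phi_0(x)}{|z-x|^{1+2\alpha}}\,dz.$$
I fix $R>0$ with $\phi_0\ge 3/4$ on $[R,\infty)$ and $R'>0$ with $\phi_0\le 1/4$ on $(-\infty,-R']$. Restricting the integral to $z\ge R$ yields, for $x\le -R'$,
$$c_0\phi_0'(x)\ge \frac{c_\alpha}{2}\int_R^\infty\frac{dz}{(z-x)^{1+2\alpha}}=\frac{c_\alpha}{4\alpha}\,(R-x)^{-2\alpha}\ge \frac{m}{|x|^{2\alpha}}.$$
The remaining compact range $-R'\le x\le -1$ is handled by the strict positivity of $\phi_0'$ (obtained from the construction in Theorem~\ref{thm:exist} via a strong maximum principle for the equation satisfied by $\phi_0'$) together with continuity and compactness.

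\textbf{Upper bound.} I would construct a global supersolution of the linear equation that decays as $|x|^{-(2\alpha-1)}$. Fix a smooth nondecreasing $\bar\phi:\RR\to\RR$ with $\bar\phi(x)=M|x|^{-(2\alpha-1)}$ for $x\le -2$, $\bar\phi(x)=M$ for $x\ge 0$, and a smooth monotone interpolation on $[-2,0]$. Splitting the singular integral defining $(-\partial_{xx})^\alpha\bar\phi(x)$ into a region near $x$ (where the PV and the smoothness of $\bar\phi$ are used), a region far from $x$ with $z<0$ (estimated with the explicit power $|z|^{-(2\alpha-1)}$), and the region $z>0$ (where $\bar\phi\equiv M$), one obtains
$$(-\partial_{xx})^\alpha\bar\phi(x)=-\frac{C_1 M}{|x|^{2\alpha}}+o(|x|^{-2\alpha}),\qquad c_0\bar\phi'(x)=\frac{c_0(2\alpha-1)M}{|x|^{2\alpha}}+o(|x|^{-2\alpha})$$
as $x\to-\infty$, with $C_1>0$ depending only on $\alpha$ (it is essentially $c_\alpha/(2\alpha)$ from the $z>0$ piece). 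Choosing $M$ sufficiently large gives $(-\partial_{xx})^\alpha\bar\phi+c_0\bar\phi'\ge 0$ on $(-\infty,-R'']$ for some $R''$; enlarging $M$ further enforces $\bar\phi\ge \phi_0$ on $[-R'',\infty)$. A nonlocal comparison argument applied to $w=\bar\phi-\phi_0$ (at a would-be negative minimum $x_0\in(-\infty,-R'']$ one has $w'(x_0)=0$ and $(-\partial_{xx})^\alpha w(x_0)<0$, contradicting $(-\partial_{xx})^\alpha w+c_0w'\ge 0$) then yields $\phi_0\le \bar\phi$ globally, i.e.\ $\phi_0(x)\le M|x|^{-(2\alpha-1)}$ for $x\le -1$.

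\textbf{Main obstacle.} The principal technical difficulty lies in the sharp asymptotic computation of $(-\partial_{xx})^\alpha\bar\phi(x)$ as $x\to-\infty$, with explicit identification of the leading constant $C_1$ and uniform control of the remainder; the condition $\alpha>1/2$ is essential here since it is what makes the contribution of the tail of $\bar\phi$ on $(-\infty,0)$ subdominant with respect to $|x|^{-2\alpha}$. It is precisely the algebraic balance $c_0(2\alpha-1)M\ge C_1 M$ that forces the power $|x|^{-(2\alpha-1)}$ (and no faster decay) to be the correct one, so any attempted supersolution with stronger decay must fail. A secondary point, routine but worth verifying, is the nonlocal comparison principle on $(-\infty,-R'']$ with $\bar\phi,\phi_0\to 0$ at $-\infty$ and prescribed ordering on $[-R'',\infty)$.
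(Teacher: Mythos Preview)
Your overall strategy for the upper bound --- build a supersolution $\bar\phi$ with tail $M|x|^{-(2\alpha-1)}$ and apply comparison --- matches the paper's, but there is a genuine gap in the execution. Since $\bar\phi = Mg$ for a fixed profile $g$, one has $(-\partial_{xx})^\alpha\bar\phi + c_0\bar\phi' = M\big[(-\partial_{xx})^\alpha g + c_0 g'\big]$, so $M$ factors out and ``choosing $M$ sufficiently large'' has no effect whatsoever on the supersolution inequality. What is actually required is the balance $c_0(2\alpha-1) \ge C_1$, but $c_0$ is determined by the problem and there is no reason this should hold. The paper resolves this by \emph{rescaling}: with $\vphi_\eps(x) = \vphi(\eps x)$ one gets $(-\partial_{xx})^\alpha\vphi_\eps + \eps^{2\alpha-1}K\,\vphi_\eps' \ge 0$ for some large $K$, and since $2\alpha-1>0$ one can take $\eps$ small so that $\eps^{2\alpha-1}K \le c_0$. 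It is this rescaling, not the size of $M$, that exploits the hypothesis $\alpha>1/2$; $M$ is only used afterwards to enforce $\bar\phi\ge\phi_0$ on the complement.

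For the lower bound on $\phi_0'$ you take a more direct route than the paper, but the step ``restricting the integral to $z\ge R$ yields a lower bound'' is not justified: it amounts to asserting that $\PV\int_{-\infty}^R \frac{\phi_0(z)-\phi_0(x)}{|z-x|^{1+2\alpha}}\,dz \ge 0$, which would follow, for instance, from convexity of $\phi_0$ on $(-\infty,R)$, but nothing in your argument establishes this. A crude bound on the $z<x$ contribution gives only $O(\phi_0(x)) = O(|x|^{-(2\alpha-1)})$, which is \emph{larger} than the $|x|^{-2\alpha}$ term you extract from $z\ge R$, so it cannot be dismissed as lower order without substantial further work. The paper avoids this issue altogether: it observes that $\phi_0'>0$ solves $(-\partial_{xx})^\alpha(\phi_0') + c_0(\phi_0')' = 0$ on $(-\infty,0]$, constructs a \emph{subsolution} $\bar\vphi(x) = |x|^{-2\alpha}$ (set to $0$ for $x>-1$) via the asymptotic $(-\partial_{xx})^\alpha\bar\vphi + c\,\bar\vphi' = \big[-\tfrac{c_\alpha}{2\alpha-1} + 2\alpha c\big]|x|^{-(2\alpha+1)} + O(|x|^{-4\alpha})$, which is $\le 0$ for small $c$, rescales as above to accommodate $c_0$, and concludes by a sliding/maximum principle argument.
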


The proof of Theorem \ref{thm:exist} follows classical arguments developed by Berestycki-Larrouturou-Lions \cite{BLL} (see also Berestycki-Nirenberg \cite{BN}): 
Truncation of the domain, construction of sub- and super-solutions  and passage to the limit. As usual, one of the main difficulty is to make sure that we recover a finite, non trivial speed of propagation at the limit.
The main novelty (compared with similar results when $\alpha=1$) is the construction of sub- and super-solutions where the classical exponential profile is replaced by power tail functions.

 \vspace{20pt}

\section{Truncation of the domain}\label{1Dwave}
The first step is to truncate the domain: for some $b>0$, we consider 
the following problem:
 \begin{equation} \label{truncatedNL}
\left\{
\begin{array}{l}
(-\partial_{xx})^\alpha \phi_b + c_b \phi'_b =f(\phi_b)\qquad \mbox{ for all }x\in [-b,b]\\[3pt]
\phi_b (x) = 0 \qquad \mbox{for $s$} \leq -b\\[3pt]
\phi_b(x) = 1 \qquad \mbox{for $s$ } \geq b \\[3pt]
\phi_b(0)=\theta.
\end{array}
\right.
\end{equation}
The goal of this section is to prove that this problem has a solution for $b$  large enough. More precisely, we are now going to prove: 
\begin{proposition}\label{prop:truncate}
Assume $\alpha \in (1/2,1)$ and that $f$ satisfies (\ref{eq:beta}). 
Then there exists a constant $M$ such that if $b>M$  the truncated problem \eqref{truncatedNL} has a unique solution ($\phi_b$, $c_b$). Furthermore,  the following properties hold:
\begin{itemize}
\item[(i)] There exists $K$ independent of $b$ such that $-K\leq c_b\leq K$.
\item[(ii)] $\phi_b$ is non-decreasing with respect to $x$ and satisfies $0 < \phi_b(x) <1$ for all $x\in(-b,b)$.
\end{itemize} 
\end{proposition}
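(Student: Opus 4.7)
The plan is to adapt the Berestycki--Larrouturou--Lions strategy to the non-local setting: for each fixed speed $c$ I first solve the elliptic equation on $[-b,b]$ with the prescribed exterior data, then select $c=c_b$ by imposing the normalization $\phi_b(0)=\theta$, and finally produce the $b$-independent bound on $c_b$ via explicit barriers.

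\emph{Existence at fixed $c$.} For $c\in\RR$ fixed, the function equal to $0$ on $(-\infty,b)$ and to $1$ on $[b,+\infty)$ is a weak sub-solution of \eqref{truncatedNL} (on $(-b,b)$ it equals $0$, its derivative vanishes, and $(-\partial_{xx})^\alpha$ of it is strictly negative because of the positive mass to the right of $b$); symmetrically, the function equal to $0$ on $(-\infty,-b]$ and to $1$ on $(-b,+\infty)$ is a super-solution. A monotone iteration -- solving at each step the linear equation $(-\partial_{xx})^\alpha v+cv'+\lambda v=\lambda u_k+f(u_k)$, with $\lambda>\|f'\|_\infty$ so that $u\mapsto\lambda u+f(u)$ is non-decreasing on $[0,1]$ -- then yields a solution $\phi^c_b$ with $0\le \phi^c_b\le 1$, smooth in $(-b,b)$ by interior regularity for the fractional Laplacian.

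\emph{Strict inequalities, monotonicity, and uniqueness for fixed $c$.} The strong maximum principle for $(-\partial_{xx})^\alpha$ gives $0<\phi^c_b<1$ on $(-b,b)$. Monotonicity in $x$ is obtained by the sliding method: for $\tau\in(0,2b]$, compare $\phi^c_b$ with $\phi^\tau(x):=\phi^c_b(x+\tau)$, extended by the exterior data. For $\tau$ close to $2b$ the inequality $\phi^\tau\ge\phi^c_b$ is immediate on $\RR\setminus(-b,b)$ and follows on $(-b,b)$ from the strong maximum principle combined with the Lipschitz character of $f$ on $[0,1]$. Decreasing $\tau$ to the critical value $\tau^*\ge 0$ at which contact first occurs and applying the strong maximum principle once more forces $\tau^*=0$, proving monotonicity; the same argument applied to two solutions at the same $c$ yields uniqueness.

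\emph{Selection of $c_b$.} Uniqueness at fixed $c$, together with standard compactness, makes the map $c\mapsto\phi^c_b(0)$ continuous and strictly monotone (shown by a further sliding comparison of $\phi^{c_1}_b$ and $\phi^{c_2}_b$ for $c_1<c_2$, using the non-negativity of $(\phi^{c_2}_b)'$). As $c\to+\infty$ the drift dominates and pushes $\phi^c_b$ toward $0$ on compact subsets of $[-b,b)$, while $c\to-\infty$ pushes it toward $1$ on compact subsets of $(-b,b]$. The intermediate value theorem then provides a unique $c_b$ with $\phi^{c_b}_b(0)=\theta$, and the associated $\phi_b$ automatically inherits the monotonicity and strict bounds $0<\phi_b<1$ from the previous step.

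\emph{Uniform bound on $c_b$ -- the main obstacle.} Boundedness of $c_b$ independently of $b$ amounts to producing two fixed speeds $c^-<c^+$ and associated sub-/super-solutions such that $\phi^{c^+}_b(0)\le\theta\le\phi^{c^-}_b(0)$; strict monotonicity in $c$ then forces $c^-\le c_b\le c^+$. This is precisely where the non-local setting departs from the classical case: the exponential tails used when $\alpha=1$ are incompatible with $(-\partial_{xx})^\alpha$, whose natural decay is algebraic. I therefore construct barriers whose left-tail behavior is of order $|x|^{-(2\alpha-1)}$ (consistent with the sharp asymptotics stated in Theorem~\ref{thm:asymp}), glued to a suitable flat piece near $1$ on the right. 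Verifying that such an explicit power-tail profile is genuinely a sub- or super-solution of the travelling-wave equation at some fixed speed -- i.e., estimating $(-\partial_{xx})^\alpha$ applied to such a piecewise-defined function and comparing it with the drift and reaction terms -- is the main technical point, and is what dictates the restriction $\alpha>1/2$ (which makes the tail integrable against $\phi'$).
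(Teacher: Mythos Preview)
Your proposal is correct and follows essentially the same route as the paper: existence at fixed $c$ via sub-/super-solutions and Perron's method, sliding for monotonicity and uniqueness, continuity in $c$ for the selection of $c_b$, and explicit power-tail barriers of order $|x|^{-(2\alpha-1)}$ for the uniform speed bound. The only organizational difference is that the paper uses the barrier argument directly (showing $\phi_c(0)<\theta$ for $c>K$ and $\phi_c(0)>\theta$ for $c<-K$, its Lemma~\ref{lem:c}) to simultaneously select $c_b$ and bound it, so your separate soft argument ``as $c\to\pm\infty$ the drift dominates'' is redundant.
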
 
Before we can prove this Proposition, we need to detail the construction of sub- and super-solutions.

\subsection{Construction of sub- and super-solutions}
In the proof of the existence of traveling waves for the standard Laplace operator ($\alpha=1$), sub- and super-solution of the form $e^{\gamma x}$ play a crucial role,  in particular in the determination of the asymptotic behavior of the traveling waves as $x\rightarrow -\infty$. 
These particular functions are replaced, in the case of the fractional Laplace operator, by functions with polynomial tail.
In what follows, we will rely on two important lemmas:
\begin{lemma}\label{lem:phi}
Let $\beta\in(0,1)$ and 
define
$$ \vphi(x)=\left\{
\begin{array}{ll}
\frac{1}{|x|^{\beta}} & \mbox{ if } x<-1 \\
1 & \mbox{ if }x>-1.
\end{array}
\right.
$$ 
Then $\vphi$ satisfies
$$(-\partial_{xx})^\alpha \vphi + c \vphi'(x) = \frac{-c_{\alpha}}{2\alpha |x|^{2\alpha}}  + c  \frac{\beta}{|x|^{\beta+1}} + O\left(\frac{1}{|x|^{\beta+2\alpha}}\right)$$
when $x\rightarrow -\infty$.
\end{lemma}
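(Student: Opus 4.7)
The plan is to insert the singular-integral definition of $(-\pa_{xx})^\alpha$ directly and carry out an asymptotic analysis as $x\to-\infty$. The drift term is immediate: for $x<-1$ we have $\vphi(x)=(-x)^{-\beta}$, so $\vphi'(x)=\beta\,|x|^{-\beta-1}$, which already accounts for the middle term of the expansion. Everything else reduces to showing
\[
(-\pa_{xx})^\alpha\vphi(x)\;=\;\frac{-c_\alpha}{2\alpha\,|x|^{2\alpha}}\;+\;O\bigl(|x|^{-\beta-2\alpha}\bigr)\qquad(x\to -\infty).
\]

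Fix $x<-1$. The only singularity of the integrand sits at $z=x\in(-\infty,-1)$, so I would split the defining integral at $z=-1$ into a regular piece (over $z>-1$, where $\vphi\equiv 1$) and a principal-value piece (over $z<-1$, where $\vphi(z)=|z|^{-\beta}$):
\[
(-\pa_{xx})^\alpha\vphi(x)\;=\;c_\alpha\PV\!\int_{-\infty}^{-1}\!\frac{|x|^{-\beta}-|z|^{-\beta}}{|x-z|^{1+2\alpha}}\,dz\;+\;c_\alpha\!\int_{-1}^{\infty}\!\frac{|x|^{-\beta}-1}{(z-x)^{1+2\alpha}}\,dz\;=:\;I_1(x)+I_2(x).
\]

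The piece $I_2$ carries the leading behavior and is essentially elementary: the $z$-integral evaluates to $1/(2\alpha|x+1|^{2\alpha})$, and expanding $|x+1|^{-2\alpha}=|x|^{-2\alpha}(1+O(1/|x|))$ together with $|x|^{-\beta}-1=-1+O(|x|^{-\beta})$ gives $I_2(x)=-c_\alpha/(2\alpha|x|^{2\alpha})+O(|x|^{-2\alpha-\beta})$, producing the announced principal term. For $I_1$ I would use the scaling substitution $z=xy$ ($y>0$, $|z|=|x|y$, $|x-z|=|x|\,|1-y|$), which exploits the homogeneity of $|z|^{-\beta}$ and yields
\[
I_1(x)\;=\;c_\alpha\,|x|^{-\beta-2\alpha}\,\PV\!\int_{1/|x|}^{\infty}\frac{1-y^{-\beta}}{|1-y|^{1+2\alpha}}\,dy.
\]
It then suffices to show that the $y$-integral converges to a finite constant as $|x|\to\infty$, with truncation error $O(|x|^{\beta-1})$; this immediately gives $I_1(x)=O(|x|^{-\beta-2\alpha})$, which is absorbed into the claimed remainder.

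The main technical step is the analysis of the rescaled $y$-integral, and this is where the full range of parameter constraints is used. Near $y=1$, a Taylor expansion gives $1-y^{-\beta}=\beta(y-1)+O((y-1)^2)$, so the leading singular contribution is odd around $y=1$ and is killed by the principal value, leaving an $O(|1-y|^{1-2\alpha})$ remainder that is integrable precisely because $\alpha<1$; integrability at $y=0$ requires $\beta<1$, and decay at $y=\infty$ requires $\alpha>0$. Hence $A:=\PV\!\int_0^\infty(1-y^{-\beta})/|1-y|^{1+2\alpha}\,dy$ is a finite constant, and a direct bound on $\int_0^{1/|x|} y^{-\beta}\,dy$ gives the truncation error $O(|x|^{\beta-1})$. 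The genuine obstacle is this principal-value cancellation at $y=1$: without it the $z<-1$ contribution would be of the same order $|x|^{-2\alpha}$ as $I_2$ and the expansion would fail. Summing $I_1(x)+I_2(x)+c\vphi'(x)$ then yields exactly the announced expression.
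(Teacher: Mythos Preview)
Your proposal is correct and follows essentially the same approach as the paper: both split the singular integral at the point where $\vphi$ changes definition ($z=-1$), evaluate the piece over $\{z>-1\}$ explicitly to extract the leading term $-c_\alpha/(2\alpha|x|^{2\alpha})$, and handle the piece over $\{z<-1\}$ by the scaling substitution $z=xy$ (in the paper's notation this is the shift $z\mapsto x+y$ followed by $y=xz$, so your variable $y$ is the paper's $z+1$), reducing it to $|x|^{-\beta-2\alpha}$ times a principal-value integral that is finite precisely because $\beta<1$. Your write-up is in fact a bit more explicit than the paper's about the PV cancellation at $y=1$ and the truncation error, but the argument is the same.
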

and 
\begin{lemma}\label{lem:phi2}
Let $\beta>1$ and 
define
$$ \bar \vphi(x)=\left\{
\begin{array}{ll}
\frac{1}{|x|^{\beta}} & x<-1 \\
0 & x>-1
\end{array}
\right.
$$ 
then
$$(-\partial_{xx})^\alpha \bar \vphi+ c \bar \vphi'(x) =\frac{-c_{\alpha}}{\beta-1}\frac{1}{|x|^{2\alpha+1}}  + c  \frac{\beta}{|x|^{\beta+1}} + O\left(\frac{1}{|x|^{\beta+2\alpha}}\right)$$
when $x\rightarrow -\infty$.
\end{lemma}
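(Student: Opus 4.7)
The plan is to work directly from the singular-integral representation of $(-\partial_{xx})^\alpha \bar\vphi$, following the strategy of Lemma \ref{lem:phi} but accounting for the new regime $\beta>1$. Since $\bar\vphi(x)=|x|^{-\beta}$ for $x<-1$ and $\bar\vphi\equiv 0$ on $(-1,\infty)$, I split the integral at $z=-1$:
\[
(-\partial_{xx})^\alpha \bar\vphi(x) = c_\alpha\PV\!\int_{-\infty}^{-1}\!\frac{|x|^{-\beta}-|z|^{-\beta}}{|x-z|^{1+2\alpha}}\,dz \;+\; c_\alpha|x|^{-\beta}\!\int_{-1}^{\infty}\!\frac{dz}{|x-z|^{1+2\alpha}} \;=:\; J_1(x)+J_2(x).
\]
The second piece is elementary, $J_2(x)=\tfrac{c_\alpha}{2\alpha}|x|^{-\beta}(|x|-1)^{-2\alpha}=O(|x|^{-\beta-2\alpha})$, so it is absorbed in the announced error.

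For $J_1$ I change variables $z=x+|x|\eta$; since $x<0$, one has $|x-z|=|x||\eta|$ and $|z|=|x||\eta-1|$, and the constraint $z<-1$ becomes $\eta<1-1/|x|$. This yields
\[
J_1(x) = c_\alpha\,|x|^{-\beta-2\alpha}\,\PV\!\int_{-\infty}^{1-1/|x|}\frac{1-|\eta-1|^{-\beta}}{|\eta|^{1+2\alpha}}\,d\eta.
\]
The essential difference with Lemma \ref{lem:phi} is that for $\beta>1$ the integrand is non-integrable at $\eta=1$: the enforced truncation at $1-1/|x|$ (which records the discontinuity of $\bar\vphi$ at $-1$) is precisely what will produce a tail decaying more slowly than the prefactor. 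Fixing a small $\delta>0$ and splitting the $\eta$-integral at $1-\delta$, the ``far'' part on $(-\infty,1-\delta)$ is bounded uniformly in $x$ and contributes $O(|x|^{-\beta-2\alpha})$. On $(1-\delta,1-1/|x|)$ I Taylor-expand $|\eta|^{-1-2\alpha}=1+O(1-\eta)$ and substitute $u=1-\eta$ to extract the dominant singularity,
\[
-\int_{1/|x|}^{\delta}u^{-\beta}\,du \;=\; -\frac{|x|^{\beta-1}}{\beta-1}+O(1),
\]
which, multiplied by the prescaling prefactor, delivers the stated leading term $-\tfrac{c_\alpha}{\beta-1}|x|^{-2\alpha-1}$, everything else being absorbed into $O(|x|^{-\beta-2\alpha})$.

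Finally, a direct differentiation for $x<-1$ gives $\bar\vphi'(x)=\beta|x|^{-\beta-1}$, contributing the drift term $c\beta|x|^{-\beta-1}$; summing the three pieces produces the claim. The step I expect to be the most delicate is the bookkeeping near $\eta=1$: the non-integrable singularity has to be isolated explicitly via the truncation at $1-1/|x|$, while the Taylor remainders of $|\eta|^{-1-2\alpha}$ around $\eta=1$ and the ``far'' region have to be checked to contribute only at order $O(|x|^{-\beta-2\alpha})$. Everything else is either an explicit integral (as for $J_2$ and for $\bar\vphi'$) or a uniformly bounded remainder.
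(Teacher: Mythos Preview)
Your proof is correct and follows essentially the same route as the paper: the paper also splits the singular integral at the discontinuity point $z=-1$, performs the rescaling $y=xz$ (your $\eta$ is simply $-z$ in their notation), and identifies the leading term as coming from the non-integrable singularity of the integrand at the boundary of the truncated domain. Your treatment is in fact more explicit than the paper's about isolating the divergent part via the cutoff at $1-1/|x|$ and controlling the Taylor remainders; the paper records the same conclusion in one line.
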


\begin{proof}[Proof of Lemma \ref{lem:phi}]
We want to estimate  $(-\partial_{xx})^\alpha \vphi$ for $x<-1$. We have:
$$(-\partial_{xx})^\alpha \vphi(x)=- c_{\alpha} \PV \int_{\RR} \frac{\vphi(x+y)-\vphi(x)}{|y|^{1+2\alpha}}\,dy,$$
which we decompose as follow:
\begin{eqnarray*}
(-\partial_{xx})^\alpha \vphi(x)& = & c_{\alpha}\int_{-\infty}^{-1-x}\frac{\vphi(x)-\vphi(x+y)}{|y|^{1+2\alpha}}\,dy+c_{\alpha}\int_{-1-x}^{+\infty} \frac{\vphi(x)-\vphi(x+y)}{|y|^{1+2\alpha}}\,dy\\
& = & I+II
\end{eqnarray*}

A simple explicit computation yields: 
$$II=\left(\frac{1}{|x|^\beta}-1\right) \frac{c_{\alpha}}{2\alpha |x+1|^{2\alpha}}.$$
Performing the change of variables $y=xz$, one gets 
\begin{eqnarray*}
I & = &  \frac{c_{\alpha}}{|x|^{\beta+2\alpha}}\int_{+\infty}^{-\frac{1}{x}-1} \frac{|z+1|^\beta-1}{|z+1|^\beta |z|^{1+2\alpha}}\, dz.
\end{eqnarray*}
Note that  the integrand has a singularity at $z=0$, and this integral has to be understood as a principal value. 
We also observe  that the integrand  has a singularity at $z=-1$, but since $\beta<1$, this singularity is integrable, and thus
$$ I\sim -c_{\alpha}\frac{1}{|x|^{\beta+2\alpha}} \PV \int_{-1}^{+\infty} \frac{|z+1|^\beta-1}{|z+1|^\beta |z|^{1+2\alpha}}\, dz.\qquad \mbox{ as } x\rightarrow -\infty .$$ 
We deduce:
$$(-\partial_{xx})^\alpha \vphi(x)=\frac{-c_{\alpha}}{2\alpha |x|^{2\alpha}} +O\left(\frac{1}{|x|^{\beta+2\alpha}}\right)$$
when $x\rightarrow -\infty$, and the result follows.
\end{proof}

\bigskip

\begin{proof}[Proof of Lemma \ref{lem:phi2}]
Again, we decompose $(-\partial_{xx})^\alpha \bar \vphi$  as follow:
\begin{eqnarray*}
(-\partial_{xx})^\alpha \bar \vphi(x)& = & c_{\alpha}\int_{-\infty}^{-1-x}\frac{\bar \vphi(x)-\bar \vphi(x+y)}{|y|^{1+2\alpha}}\,dy+c_{\alpha}\int_{-1-x}^{+\infty} \frac{\bar \vphi(x)-\bar \vphi(x+y)}{|y|^{1+2\alpha}}\,dy\\
& = & I+II
\end{eqnarray*}

Now, a simple explicit computation yields: 
$$II=\frac{c_{\alpha}}{|x|^\beta} \frac{1}{2\alpha |x+1|^{2\alpha}}.$$
And performing the change of variables $y=xz$, one gets 
\begin{eqnarray*}
I & = &  \frac{c_{\alpha}}{|x|^{\beta+2\alpha}}\int_{+\infty}^{-\frac{1}{x}-1} \frac{|z+1|^\beta-1}{|z+1|^\beta |z|^{1+2\alpha}}\, dz.
\end{eqnarray*}
Note that  the integrand as a singularity at $z=0$, and this integral has to be understood as a principal value. 
We also observe  that the integrand  has a singularity at $z=-1$ and since $\beta>1$, this singularity is divergent and thus 
$$ I\sim \frac{-c_{\alpha}}{\beta-1} |x|^{\beta-1}. $$
We deduce:
$$(-\partial_{xx})^\alpha \bar \vphi(x)=\frac{-c_{\alpha}}{\beta-1}\frac{1}{|x|^{2\alpha+1}} +O\left(\frac{1}{|x|^{\beta+2\alpha}}\right)$$
which yields the result.
\end{proof}

\bigskip

\subsection{Proof of Proposition \ref{prop:truncate}}
We now turn to the proof of  Proposition \ref{prop:truncate}.
First, we fix $c \in \RR$ and consider the following problem: 
 \begin{equation} \label{eq:phic}
\left\{
\begin{array}{l}
(-\partial_{xx})^\alpha \phi+ c\, \phi' =f(\phi)\qquad \mbox{ for all }x\in [-b,b]\\[3pt]
\phi (x) = 0 \qquad \mbox{for } x \leq -b\\[5pt]
\phi(x) = 1 \qquad \mbox{for } x \geq b 
\end{array}
\right.
\end{equation}
We have:
\begin{lemma}
For any $c\in \RR$, Equation (\ref{eq:phic}) has a unique solution $\phi_c$. Furthermore $\phi_c$ is non-decreasing with respect to $x$ and $c \to \phi_c$ is continuous. 
\end{lemma}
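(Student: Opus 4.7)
The plan is to establish existence by a Schauder fixed-point argument, then derive monotonicity and uniqueness from a nonlocal sliding method, and finally read off continuity in $c$ from uniqueness together with uniform compactness estimates. Throughout, the natural setting is $C([-b,b])$, with the exterior data $\phi \equiv 0$ on $(-\infty,-b]$ and $\phi \equiv 1$ on $[b,\infty)$ understood tacitly.

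For existence with $c$ fixed, I would define the map $T : \{v \in C([-b,b]): 0 \leq v \leq 1\} \to C([-b,b])$ sending $v$ to the unique bounded solution $\phi$ of the linear problem
\begin{equation*}
(-\pa_{xx})^\alpha \phi + c\,\phi' = f(v) \quad \text{on } (-b,b),
\end{equation*}
with the prescribed exterior data. Solvability of the linear problem is standard (Perron's method, using the constants $0$ and $1$ as barriers), and the maximum principle gives $0 \leq \phi \leq 1$, so $T$ preserves the convex set. Interior and boundary H\"older estimates for the fractional Laplacian with bounded drift (for instance via the Caffarelli--Silvestre extension, which converts the equation into a degenerate elliptic problem on a half-plane) yield compactness of $T$, while continuity of $f$ together with stability of the linear problem yield continuity of $T$. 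Schauder's fixed-point theorem then produces a solution $\phi_c$ of \eqref{eq:phic}.

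Monotonicity I would prove by the sliding method. For $\tau \geq 0$, let $\phi^\tau(x) := \phi_c(x+\tau)$; by translation invariance this also solves the equation on the shifted interval. A direct inspection of the exterior data shows $\phi^\tau \geq \phi_c$ outside the overlap $(-b, b-\tau)$, and this overlap is empty as soon as $\tau \geq 2b$, so $\phi^\tau \geq \phi_c$ globally for all such $\tau$. Set $\tau^* := \inf\{\tau \geq 0 : \phi^\tau \geq \phi_c \text{ on } \RR\}$; the goal is $\tau^* = 0$. If instead $\tau^* > 0$, the difference $w := \phi^{\tau^*} - \phi_c$ is nonnegative, vanishes at some contact point $x_0 \in (-b,b-\tau^*)$, and satisfies a linear inequality $(-\pa_{xx})^\alpha w + c\,w' + d(x)\,w \leq 0$ with $d \in L^\infty$ (obtained by Lipschitz linearisation of $f$ between $\phi^{\tau^*}$ and $\phi_c$); the strong maximum principle for the fractional Laplacian then forces $w \equiv 0$, contradicting the mismatch of exterior data when $\tau^* > 0$. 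The same sliding comparison applied to two solutions $\phi_1, \phi_2$ of \eqref{eq:phic} for a common $c$ yields $\phi_1 \equiv \phi_2$, hence uniqueness.

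Continuity of $c \mapsto \phi_c$ then follows by a compactness/uniqueness argument: if $c_n \to c$, the family $\{\phi_{c_n}\}$ is bounded in $[0,1]$ and enjoys uniform H\"older estimates on compact sets of $c$, so Arzel\`a--Ascoli extracts a subsequence converging uniformly to some $\phi^*$; passing to the limit (using continuity of $f$) identifies $\phi^*$ as a solution for the parameter $c$, hence $\phi^* = \phi_c$ by uniqueness, and the whole sequence converges. The main obstacle I anticipate is the sliding step, specifically the need for a strong maximum principle for the drift-perturbed nonlocal operator $(-\pa_{xx})^\alpha + c\,\pa_x + d(x)$ on the overlap interval; once this is in place (either via a direct contradiction using the pointwise integral representation of $(-\pa_{xx})^\alpha$ at the contact point, or by transferring to the Caffarelli--Silvestre harmonic extension and invoking the classical Hopf lemma), the rest of the proof is routine book-keeping.
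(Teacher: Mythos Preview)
Your sliding and continuity arguments match the paper's (very brief) proof, which invokes Perron's method for existence, sliding for monotonicity and uniqueness, and cites \cite{BN} for continuity in $c$. The gap is in your existence step. You claim that for the \emph{linear} problem $(-\pa_{xx})^\alpha \phi + c\,\phi' = f(v)$ the constants $0$ and $1$ are barriers and hence $0\le \phi\le 1$ by the maximum principle. The lower bound is fine since $f(v)\ge 0$, but $1$ is \emph{not} a supersolution of this linear problem: $(-\pa_{xx})^\alpha 1 + c\cdot 0 = 0 < f(v)$ wherever $v\in(\theta,1)$, and a positive right-hand side can force the solution to overshoot $1$ (already for the local model $-\phi''=g>0$ on an interval with boundary values $0$ and $1$). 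So $T$ need not map $\{0\le v\le 1\}$ into itself as stated. The paper sidesteps this by running Perron directly on the \emph{nonlinear} problem, where $1$ genuinely is a supersolution because $f(1)=0$. An equivalent repair of your scheme is to shift the linearisation, solving $(-\pa_{xx})^\alpha \phi + c\,\phi' + K\phi = f(v)+Kv$ with $K$ large enough that $u\mapsto f(u)+Ku$ is nondecreasing on $[0,1]$; then $0$ and $1$ are again sub- and supersolutions and $T$ becomes order-preserving.

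A minor remark on the sliding step: you linearise $f$ as $d(x)w$ with $d\in L^\infty$, but \eqref{eq:beta} only assumes $f$ continuous, so $d$ need not be bounded. Fortunately the nonlocal strong maximum principle does not need this linearisation: at an interior contact point $x_0$ one has $w(x_0)=0$, $w'(x_0)=0$, $w\ge 0$ on all of $\RR$, and the integral formula gives $(-\pa_{xx})^\alpha w(x_0)=-c_\alpha\int_{\RR} w(z)\,|x_0-z|^{-1-2\alpha}\,dz\le 0$, with equality only if $w\equiv 0$; since $\phi^{\tau^*}(x_0)=\phi_c(x_0)$, the right-hand side $f(\phi^{\tau^*})-f(\phi_c)$ vanishes at $x_0$ regardless of any Lipschitz control, and the contradiction follows directly.
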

\begin{proof}
Since $1$ and $0$ are respectively super- and sub-solutions, we can use Perron's method (recall that the fractional laplacian enjoys a comparison principle) to prove the existence of a solution $\phi_c(x)$ for any $c\in \RR$. 
By a sliding argument, we can show that $\phi_c$ is unique and non-decreasing with respect to $x$. The fact that the function $c \to \phi_c$ is continuous follows from classical arguments (see \cite{BN} for  details).
\end{proof}

We now have to show that there exists a unique $c=c_b$ such that $\phi_{c_b}(0)=\theta$. 
This will be a consequence of the following lemma:
\begin{lemma}\label{lem:c}
There exist  constants $M$, $K$  such that  for $b>M$ the followings hold:
\begin{enumerate}
 \item if $c>K$ then the solution of (\ref{eq:phic}) satisfies $\phi_c(0)<\theta$,
 \item if $c<-K$  then the solution of (\ref{eq:phic}) satisfies $\phi_c(0)>\theta$.
\end{enumerate}
\end{lemma}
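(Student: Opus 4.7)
The plan is to build a super-solution for assertion (1) and a sub-solution for assertion (2), both with polynomial tail, out of the profile $\vphi$ of Lemma~\ref{lem:phi}. Since $\alpha > 1/2$, the range $\beta\in(0,2\alpha-1)$ is nonempty and is automatically contained in $(0,1)$; fix any such $\beta$ once and for all. The key point of this choice is that in the expansion of Lemma~\ref{lem:phi} the drift contribution $c\beta/|x|^{\beta+1}$ decays \emph{strictly slower} than the fractional-diffusion contribution $c_\alpha/(2\alpha|x|^{2\alpha})$, so for $|x|$ large it dictates the sign of the left-hand side. This is the exact analogue, with power tails, of the well-known fact that for the classical Laplacian the exponential profile $e^{\gamma x}$ with small $\gamma$ beats the diffusion.

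For (1), I would set $\bar\phi(x)=\vphi(x-x_0)$ with $x_0>\theta^{-1/\beta}$, so that $\bar\phi(0)=x_0^{-\beta}<\theta$, and take $M:=x_0-1$; I then work under the hypothesis $b>M$. On $\{x\ge x_0-1\}$, $\bar\phi\equiv 1$ is at its global maximum, hence $\bar\phi'=0$, $f(\bar\phi)=0$, and $(-\partial_{xx})^\alpha \bar\phi\ge 0$, so the super-solution inequality is immediate. On $\{x<x_0-1\}$, the shifted Lemma~\ref{lem:phi} gives
$$
(-\partial_{xx})^\alpha \bar\phi(x)+c\,\bar\phi'(x)=\frac{c\beta}{|x-x_0|^{\beta+1}}-\frac{c_\alpha}{2\alpha|x-x_0|^{2\alpha}}+O\big(|x-x_0|^{-\beta-2\alpha}\big).
$$
For $|x-x_0|$ large, the sign is controlled by the positive term $c\beta/|x-x_0|^{\beta+1}$ (since $\beta+1<2\alpha$), and $f(\bar\phi)=0$ there because $\bar\phi<\theta$; on the residual annulus $\{1\le|x-x_0|\le R\}$, $\bar\phi'$ is bounded below by a positive constant and $(-\partial_{xx})^\alpha\bar\phi$ is bounded, so enlarging $c$ forces $c\,\bar\phi'\ge \max f$. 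Thus for $c$ above an explicit threshold $K_+$ (depending only on $\alpha,\beta,\theta,\max f$) the function $\bar\phi$ is a genuine super-solution of \eqref{eq:phic}, its exterior conditions holding because $b>x_0-1$. Comparison yields $\phi_c\le\bar\phi$, and in particular $\phi_c(0)\le x_0^{-\beta}<\theta$.

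For (2), run the mirror argument with the sub-solution $\underline\phi(x)=1-\vphi(x_0-x)$ for some $x_0<-(1-\theta)^{-1/\beta}$, so that $\underline\phi(0)=1-|x_0|^{-\beta}>\theta$. Applying Lemma~\ref{lem:phi} at $y=x_0-x$ gives, on $\{x>x_0+1\}$,
$$
(-\partial_{xx})^\alpha \underline\phi(x)+c\,\underline\phi'(x)=\frac{c_\alpha}{2\alpha|x-x_0|^{2\alpha}}+\frac{c\beta}{|x-x_0|^{\beta+1}}+O\big(|x-x_0|^{-\beta-2\alpha}\big).
$$
For $c$ sufficiently negative the term $c\beta/|x-x_0|^{\beta+1}<0$ dominates at infinity (again because $\beta+1<2\alpha$) and the same compact-zone check as above handles the middle range, so the left-hand side is nonpositive, hence $\le f(\underline\phi)$. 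On $\{x\le x_0+1\}$, $\underline\phi=0$ is a global minimum, giving $(-\partial_{xx})^\alpha\underline\phi\le 0=f(\underline\phi)$ and $\underline\phi'=0$. Comparison yields $\phi_c\ge\underline\phi$ and therefore $\phi_c(0)>\theta$. Taking $K$ to be the larger of the two thresholds produced above finishes the proof.

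The main obstacle I expect is not the far-field regime (there the argument is just the decay comparison $\beta+1<2\alpha$) but the compact transition zone near the corner of $\vphi$ at $x\approx x_0-1$ (resp.\ $x_0+1$): Lemma~\ref{lem:phi} cannot be quoted there, and one has to verify by direct estimation that $(-\partial_{xx})^\alpha\bar\phi$ is bounded and $\bar\phi'$ is bounded below by a positive constant, so that the drift term $c\,\bar\phi'$ can absorb both $(-\partial_{xx})^\alpha\bar\phi$ and $\max f$ uniformly once $|c|$ is large. Ensuring that the resulting constants $M$ and $K$ come out independent of $b$---essential for the passage $b\to\infty$ in the next step---is the main piece of bookkeeping.
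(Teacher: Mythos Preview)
Your argument is correct and follows the same scheme as the paper: use Lemma~\ref{lem:phi} to build a polynomial-tail barrier, verify the super-solution inequality in the far field via the asymptotic expansion and in the compact transition zone by taking $|c|$ large, then reflect for the sub-solution. The one genuine variation is the choice of exponent. The paper takes the \emph{critical} value $\beta=2\alpha-1$, so that the drift term $c\beta|x|^{-\beta-1}$ and the diffusion term $c_\alpha(2\alpha)^{-1}|x|^{-2\alpha}$ decay at the same rate and the far-field condition becomes the explicit threshold $c\ge c_\alpha/(2\alpha(2\alpha-1))$; you take a \emph{subcritical} $\beta<2\alpha-1$, so that the drift is strictly dominant at infinity for any $c>0$ and only the compact zone determines $K$. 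Both routes prove Lemma~\ref{lem:c} equally well; the paper's choice has the bonus that the same barrier is recycled in Lemma~\ref{eq:lemlim} and Proposition~\ref{prop:asymp1} to yield the sharp decay $\phi_0(x)\le M|x|^{-(2\alpha-1)}$, whereas your $\beta$ would only give $\phi_0(x)\le M|x|^{-\beta}$.

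On your flagged obstacle at the corner of $\vphi$: the paper treats this exactly as you do (it simply asserts that $(-\partial_{xx})^\alpha\vphi$ is bounded on $(-A,-1)$ and that $\vphi'$ is bounded below). In fact $(-\partial_{xx})^\alpha\vphi$ is not bounded near the corner, but the singularity has the \emph{favorable} sign: since $\vphi$ has a concave kink at $-1$ (the derivative drops from $\beta$ to $0$), one has $(-\partial_{xx})^\alpha\vphi(x)\to+\infty$ as $x\to(-1)^-$, so it is bounded \emph{below} on $(-A,-1)$, which is all the super-solution inequality needs. The reflected barrier for part~(2) has a convex kink and the singularity goes to $-\infty$, again the right direction for a sub-solution. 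So your concern does not obstruct the proof; alternatively, a routine smoothing of $\vphi$ near the corner removes the issue without affecting any of the estimates.
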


Together with the fact that $\phi_{c}(0)$ is continuous with respect to $c$, Lemma~\ref{lem:c} implies that  there exists $c_b\in[-K,-K]$ such that $\phi_{c_b}$ satisfies  $\phi_{c_b}(0)=\theta$ and is thus a solution of  (\ref{truncatedNL}). This completes the proof of Proposition \ref{prop:truncate}.

\begin{proof}[Proof of Lemma \ref{lem:c}] 
We consider the function
\begin{equation}\label{eq:phi2} 
\vphi(x)=\left\{
\begin{array}{ll}
\frac{1}{|x|^{2\alpha -1}} & x<-1 \\
1 & x\geq -1
\end{array}
\right.
\end{equation}
and note that Lemma \ref{lem:phi} (with $\beta=2\alpha-1$) yields that if $c$ is large enough ($c\geq\frac{c_{\alpha}}{2\alpha(2\alpha-1)}$),  then 
$$ (-\partial_{xx})^\alpha \vphi(x) + c\vphi '(x)  \geq 0$$
for $x\leq - A$ (for some $A$ large enough). We can also assume that $\vphi(x)\leq \theta$ for $x\leq -A$, and so
$$  (-\partial_{xx})^\alpha \vphi(x) + c \vphi'(x)  \geq f(\vphi)=0\qquad \mbox{ for } x\leq- A.$$
Furthermore, for $-A<x< -1$, $ (-\partial_{xx})^\alpha \vphi(x) $ is bounded while
$$ c\vphi'(x) \geq c  \frac{2\alpha-1}{A^{2\alpha}}.$$
For $c$ large enough, we thus have
$$ (-\partial_{xx})^\alpha \vphi(x) + c\vphi'(x)  \geq \sup f\geq f(\vphi)\qquad\mbox{ for $-A<x<-1$.}$$
We deduce that there exists $K$ such that if  $c\geq K$ then
$$ (-\partial_{xx})^\alpha \vphi(x) + c\vphi'(x)  \geq f(\vphi)\qquad\mbox{ for $x<-1$}$$
and so $\vphi$ is a supersolution for (\ref{eq:phic}).

Choosing $M$ such that $\vphi(-M)<\theta$, we now see that if  $c\geq K$ and  $b>M$, then $\vphi(x-M)$ is a super-solution for (\ref{eq:phic}).
By a sliding argument, we deduce that $\phi_c(x) \leq \vphi(x-M)$ and so  $\phi_c(0)\leq \vphi(-M)<\theta$.

\bigskip

For the lower bound, we define $\vphi_1(x) = 1-\vphi(-x)$. Then we we have, if $-c\geq K$ ($c\leq -K$) and for $x >1$
$$  (-\partial_{xx})^\alpha\vphi_1(x) + c \vphi_1'(x) = -[(-\partial_{xx})^\alpha\vphi(-x) + (-c)\vphi'(-x)] \leq 0\leq f(\vphi). $$
Moreover,  we have $\vphi_1(x)=0$ for $x\leq 1$.
Proceeding as above, we deduce that if $c \leq -K $, then $\phi_c(0) > \theta$, which concludes the proof.
\end{proof}

\vspace{20pt}

\section{Proof of Theorem \ref{thm:exist}}
In order to complete  the proof of  Theorem \ref{thm:exist}, we have to prove that we can pass to the limit $b\rightarrow\infty$ in the truncated problem.
More precisely, Theorem \ref{thm:exist} follows from the following proposition:
\begin{proposition}\label{prop:limit}
Under the conditions of Proposition \ref{prop:truncate},
there exists a subsequence $b_n\rightarrow \infty$ such that $\phi_{b_n}\longrightarrow \phi_0$ and $c_{b_n} \longrightarrow c_0$. Furthermore,  $c_0 \in (0,K]$ and  $\phi_0$ is a monotone increasing solution of (\ref{1DwaveNL}).
\end{proposition}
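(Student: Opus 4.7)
The plan is to follow the classical Berestycki--Larrouturou--Lions passage to the limit, adapted to the nonlocal operator. The proof splits into three stages: compactness, passage to the limit in the PDE, and the identification of the sign of $c_0$ together with the asymptotics of $\phi_0$.

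\emph{Compactness and convergence of the equation.} By Proposition~\ref{prop:truncate}, $|c_b|\le K$, so a subsequence $c_{b_n}\to c_0\in[-K,K]$ may be extracted. The $\phi_b$ are monotone non-decreasing with values in $[0,1]$, so Helly's selection theorem yields a pointwise limit $\phi_0$ along a further subsequence. Uniform interior regularity for the fractional Laplacian (bootstrapped from $L^\infty$ to $C^{1,\gamma}_{\mathrm{loc}}$ thanks to $\alpha>1/2$) upgrades this to $C^{1,\gamma}_{\mathrm{loc}}(\RR)$ convergence; in particular $\phi_0(0)=\theta$. At each fixed $x$, the convergence $(-\partial_{xx})^\alpha\phi_{b_n}(x)\to(-\partial_{xx})^\alpha\phi_0(x)$ is obtained by splitting the integral: near $x$ the $C^{1,\gamma}$ convergence dominates the principal value, and on the tails $0\le\phi_b\le 1$ together with integrability of $|z|^{-1-2\alpha}$ yield dominated convergence. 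Hence $\phi_0$ satisfies the PDE of (\ref{1DwaveNL}) on all of $\RR$.

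\emph{Integral identity, sign of $c_0$, and asymptotic values.} The key input is the identity obtained by integrating the truncated equation over $[-R,R]$ with $R>b$. Splitting the double integral for $(-\partial_{xx})^\alpha\phi_b$ into its symmetric part over $[-R,R]^2$ (which vanishes by antisymmetry in $x\leftrightarrow y$) and its tail part (of size $O(R^{1-2\alpha})$, which converges to $0$ precisely because $\alpha>1/2$), and letting $R\to\infty$, gives
$$c_b=\int_\RR f(\phi_b)\,dx>0,$$
the strict positivity coming from the fact that, by continuity, $\phi_b$ crosses every level in $(\theta,1)$, including the open set where $f>0$. Therefore $c_0\ge 0$ in the limit. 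By monotonicity $l_\pm:=\lim_{x\to\pm\infty}\phi_0(x)$ exist, and the usual trapping argument (taking translates $\phi_0(\cdot+t_n)$ with $t_n\to\pm\infty$) forces $f(l_\pm)=0$, so $l_-\in[0,\theta]$ and $l_+\in\{\theta,1\}$. If $l_+=\theta$, then $\phi_0\equiv\theta$ on $[0,\infty)$ by monotonicity; the equation at any $x>0$ gives $(-\partial_{xx})^\alpha\phi_0(x)=0$, and the explicit formula forces $\phi_0\equiv\theta$ on all of $\RR$. To exclude this collapse one uses the uniform lower bound $\phi_b(L)\ge\theta+\eta$, for fixed $L,\eta>0$ independent of $b$, provided by a power-tail sub-solution analogous in spirit to the super-solution used in Lemma~\ref{lem:c}. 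Hence $l_+=1$, and symmetrically $l_-=0$. Finally $\int_\RR f(\phi_0)>0$, which upgrades $c_0\ge 0$ to $c_0>0$ via Fatou applied to $c_{b_n}=\int f(\phi_{b_n})$.

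\emph{Main obstacle.} The essential difficulty is the non-degeneracy of the limit profile, i.e.\ ruling out the collapse $\phi_0\equiv\theta$. This is where the polynomial sub-solutions built from Lemmas~\ref{lem:phi}--\ref{lem:phi2} play a decisive role, and where the hypothesis $\alpha>1/2$ enters twice: once for closing the integral identity at infinity, and once for the regularity bootstrap needed to pass to the limit in the nonlocal operator.
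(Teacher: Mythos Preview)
Your compactness step and passage to the limit in the PDE are fine and match the paper. The problems lie in the second stage, where your argument diverges from the paper's and contains two genuine gaps.

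\textbf{The identity $c_b=\int_\RR f(\phi_b)$ is not justified.} You integrate ``the truncated equation over $[-R,R]$ with $R>b$'', but the equation $(-\partial_{xx})^\alpha\phi_b+c_b\phi_b'=f(\phi_b)$ holds only on $[-b,b]$. For $x>b$ one has $\phi_b\equiv 1$, so $\phi_b'=0$, $f(\phi_b)=0$, yet
\[
(-\partial_{xx})^\alpha\phi_b(x)=c_\alpha\int_{-\infty}^{b}\frac{1-\phi_b(z)}{|x-z|^{1+2\alpha}}\,dz>0,
\]
and similarly $(-\partial_{xx})^\alpha\phi_b(x)<0$ for $x<-b$. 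These two tail contributions do \emph{not} cancel, so integrating over $[-R,R]$ and letting $R\to\infty$ does not yield $c_b=\int f(\phi_b)$. What survives is only $c_b=\int f(\phi_b)-\int_{-b}^{b}(-\partial_{xx})^\alpha\phi_b$, and the last term has no reason to vanish or even to have a sign. The paper avoids this by proving the integral identity \emph{for the limit} $\phi_0$ (its Lemma~\ref{lem:int0}), which solves the equation on all of~$\RR$.

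\textbf{The non-degeneracy step is only asserted.} To rule out $\phi_0\equiv\theta$ you invoke ``a power-tail sub-solution analogous to Lemma~\ref{lem:c}'', but that lemma only produces a sub-solution when $c\le -K$; for $c_b\in[-K,K]$ no such barrier is available from Lemmas~\ref{lem:phi}--\ref{lem:phi2}, and you do not construct one. The paper circumvents this entirely: it recenters the sequence at the level $\tfrac{1+\theta}{2}$, i.e.\ sets $\psi_n(x)=\phi_{b_n}(x+a_n)$ with $\phi_{b_n}(a_n)=\tfrac{1+\theta}{2}$, extracts a limit $\psi_0$ which automatically satisfies $\int f(\psi_0)\ge\kappa_0>0$, and applies the integral identity to $\psi_0$ to obtain $c_0(1-\theta)\ge\kappa_0$, hence $c_0>0$ directly. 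Only \emph{after} $c_0>0$ is established does the paper deduce $\gamma_0=0$ (via the super-solution of Lemma~\ref{lem:phi}) and $\gamma_1=1$ (via the integral identity for $\phi_0$). Your ordering---first $l_+=1$, then $c_0>0$ by Fatou---would also work in principle, but it hinges on the missing sub-solution.
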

\begin{proof}[Proof of Proposition \ref{prop:limit}]

We recall that $ c_b \in [-K,K]$, and classical elliptic estimates (see \cite{BCP}) yield: 
$$ || \phi_b||_{\mathcal C^{2,\gamma}} \leq C$$
for some $\gamma \in (0,1)$. 
Thus there exists a subsequence $b_n\rightarrow \infty$ such that 
$$ c_n :=c_{b_n} \longrightarrow c_0\in[-K,K]$$
$$ \phi_n := \phi_{b_n}  \longrightarrow \phi_0$$
as $n\rightarrow \infty$. 
It is readily seen that $\phi_0$ solves
\begin{equation}\label{eq:9}
 (-\partial_{xx})^\alpha \phi_0 + c_0\, \phi_0' =f(\phi_0)\qquad \mbox{ for all }x\in \RR.
\end{equation}

It is also readily seen that $\phi_0(x)$ is monotone increasing, $\phi_0(0)=\theta$ and $\phi_0$ is bounded. By a standard compactness argument, there exists $\gamma_0$, $\gamma_1$  such that $\lim_{x\rightarrow -\infty}\phi_0(x) = \gamma_0$ and   $\lim_{x\rightarrow +\infty}\phi_0(x) = \gamma_1$ with
$$0\leq \gamma_0\leq \theta\leq\gamma_1\leq 1.$$
It remains to prove that $c_0>0$,  $\gamma_0=0$ and  $\gamma_1=1$.
For that, we will mainly follow classical  arguments (see \cite{BLL}, \cite{BH}).

\vspace{15pt}
First, we have the following lemma:
\begin{lemma}\label{lem:int0}
The function $\phi_0$ satisfies
$$\int_\RR (-\partial_{xx})^\alpha \phi_0 (x) \,dx=0. $$
\end{lemma}
\begin{proof}[Proof of Lemma \ref{lem:int0}]
The result follows formally by integrating formula (\ref{eq:lapa}) with respect to $x$ and using the antisymmetry with respect to the variables $x$ and $z$. However, because of the principal value, one has to be a little bit careful with the use of Fubini's theorem.

To avoid this difficulty, we will use instead the equivalent formula for the fractional laplacian:
\begin{eqnarray} 
 (-\partial_{xx})^\alpha \phi_0 (x) &  =  & c_\alpha \int_{\RR\setminus [x-\eps,x+\eps]} \frac{ \phi_0(x)- \phi_0(z)}{|x-z|^{1+2\alpha}}\, dz\nonumber \\
 && +  c_\alpha \int_{[x-\eps,x+\eps]} \frac{ \phi_0(x)-\phi_0(z)+\phi_0'(x)(z-x)}{|x-z|^{1+2\alpha}}\, dz\label{eq:fracs}
  \end{eqnarray}
which is valid for all $\eps>0$ and does not involve singular integrals.
Integrating the first term with respect to $x\in \RR$, and using Fubini's theorem, we get
\begin{eqnarray*}
\int_\RR  \int_{\RR\setminus [x-\eps,x+\eps]} \frac{ \phi_0(x)- \phi_0(z)}{|x-z|^{1+2\alpha}}\, dz \, dx
& = & \int_\RR  \int_{\RR\setminus [z-\eps,z+\eps]} \frac{ \phi_0(x)- \phi_0(z)}{|x-z|^{1+2\alpha}}\, dx \, dz\\
& =& - \int_\RR  \int_{\RR\setminus [x-\eps,x+\eps]} \frac{ \phi_0(x)- \phi_0(z)}{|x-z|^{1+2\alpha}}\, dz \, dx
\end{eqnarray*}
and so this integral vanishes.
Using Taylor's theorem, the second term  in (\ref{eq:fracs}) can be rewritten as
\begin{eqnarray*}
\int_{x-\eps}^{x+\eps} \frac{1}{|x-z|^{1+2\alpha}} \int_x^z (z-t) \phi_0''(t)\, dt\, dz 
&= & 
\int_{-\eps}^{\eps} \frac{1}{|y|^{1+2\alpha}} \int_x^{x+y}  (y+x-t) \phi_0''(t)\, dt\, dy .
\end{eqnarray*}
Integrating with respect to $x$ and using (twice) Fubini's theorem, we deduce
\begin{eqnarray*}
&  & \int_\RR \int_{x-\eps}^{x+\eps} \frac{1}{|x-z|^{1+2\alpha}} \int_x^z (z-t) \phi_0''(t)\, dt\, dz \, dx \\
& &\qquad\qquad =
\int_{-\eps}^{\eps} \frac{1}{|y|^{1+2\alpha}}\int_{-\infty}^{+\infty} \int_x^{x+y}  (y+x-t) \phi_0''(t)\, dt\, dx\, dy\\
& &\qquad\qquad =
\int_{-\eps}^{\eps} \frac{1}{|y|^{1+2\alpha}}\int_{-\infty}^{+\infty} \int_{t-y}^{t}  (y+x-t) \phi_0''(t)\, dx\, dt\, dy\\
& &\qquad\qquad =
\int_{-\eps}^{\eps} \frac{y^2}{2 |y|^{1+2\alpha}}\int_{-\infty}^{+\infty}  \phi_0''(t) \, dt\, dy\\
& &\qquad\qquad = 0,
\end{eqnarray*}
where we used the fact that $\lim_{x\to \pm \infty } \phi_0'(x) = 0$ and so $\int_{-\infty}^{+\infty}  \phi_0''(t) \, dt=0$.
The lemma follows.
\end{proof}

Now, we can  integrate equation (\ref{eq:9}) with respect to $x\in \RR$, and using Lemma \ref{lem:int0}, we get: 
\begin{equation}\label{eq:intbeta}
\int_{\RR} f(\phi_0(x))\, dx  = c_0 (\gamma_1-\gamma_0)<\infty.
\end{equation}
In particular, we observe that (\ref{eq:intbeta}) implies that 
$$ f(\gamma_0 ) = f(\gamma_1)=0,$$
otherwise the integral would be infinite. 
\vspace{15pt}

Next, we prove:
\begin{lemma} \label{lem:speed}
The limiting speed satisfies:
$$c_0>0.$$
\end{lemma}
\begin{proof}
First of all, we note that for all $n$, there exists $a_n\in(0,b_n)$ such that $\phi_n(a_n) =\frac{1+\theta}{2}$. Furthermore, up to another subsequence, by elliptic estimates, the function $\psi_n(x)= \phi_{b_n}(a_n+x)$ converges to a function $\psi_0$. 
Note that since $\psi_0\in \mathcal C^\gamma$,  there exists $r>0$  such that
$$ \psi_0(x) \in \left[\frac{3+\theta}{4}, \frac{1+3\theta}{4}\right] \quad \mbox{ for } x\in[-r,r]$$
and so there exists $\kappa_0>0$ such that
\begin{equation}\label{eq:beta0}
\int_\RR f (\psi_0)\, dx >\kappa_0. 
\end{equation}

Up to a subsequence, we can assume that $b_n+a_n$ is either convergent or goes to $+\infty$.
We need to distinguish the two cases:
\medskip

\noindent {\bf Case 1: $b_n+a_n\rightarrow+ \infty$:} In that case, $\psi_0$ solves
\begin{equation}\label{eq:psi0}
(-\partial_{xx})^\alpha \psi_0+ c_0 \psi_0' =f(\psi_0)\qquad \mbox{ for all }x\in \RR.
\end{equation}
Furthermore, $\psi_0(0)=\frac{1+\theta}{2}$ and $\psi_0$ is monotone increasing. In particular, it is readily seen that there exists $\bar \gamma_0$ and $\bar \gamma_1$ such that   $\lim_{x\rightarrow -\infty}\psi_0(x) =\bar \gamma_0$ and   $\lim_{x\rightarrow +\infty}\psi_0(x) =\bar \gamma_1$ with
$$0\leq \bar \gamma_0\leq\frac{1+\theta}{2}\leq\bar \gamma_1\leq 1.$$
Integrating (\ref{eq:psi0}) over $\RR$, and using the fact that
$$
\int_\RR (-\partial_{xx})^\alpha \psi_0(x)\, dx =0
$$
(the proof is the same as in Lemma \ref{lem:int0})
we deduce
\begin{equation}\label{eq:c0g} c_0 (\bar\gamma_1-\bar\gamma_0) = \int_\RR f(\psi_0)\, dx<\infty
\end{equation}
and so 
$$ f(\bar  \gamma_0) = f (\bar  \gamma_1) = 0.$$
This implies that
$$ \bar \gamma_1=1\qquad \mbox{ and }\qquad \bar \gamma_0 \leq \theta.$$
Finally, (\ref{eq:c0g}) and (\ref{eq:beta0}) yields
$$ c_0 (1-\theta) \geq  \int_\RR f(\psi_0)\, dx\geq \kappa_0$$
which gives the result.
\medskip

\noindent {\bf Case 2: $a_n+b_n\rightarrow \bar a<\infty$:}  
 In that case, $\psi_0$ solves
\begin{equation}\label{eq:psi02}
(-\partial_{xx})^\alpha \psi_0+ c_0 \psi_0' =f(\psi_0)\qquad \mbox{ for all }x\in (-\infty,\bar a)
\end{equation}
and we need to modify the proof slightly. 
First, we notice that $\psi_0(x)=1$ for $x\geq  \bar a$, and we observe that $(-\partial_{xx})^\alpha\psi_0(x)\geq 0$ for $x\geq  \bar a$. In particular
$$
\int_{-\infty}^{\bar a}(-\partial_{xx})^\alpha\psi_0(x)\, dx \leq  \int_\RR (-\partial_{xx})^\alpha\psi_0(x)\, dx =0
$$
Proceeding as above, we check that $\lim_{x\rightarrow -\infty}\psi_0(x) =\bar \gamma_0\leq \theta$ and
integrating (\ref{eq:psi02}) over $(-\infty,\bar a)$,   we deduce
$$ c_0(1- \theta) \geq  \int_\RR f(\psi_0)\, dx>0.$$ 

\end{proof}

\bigskip

The positivity of the speed, together with the sub-solution constructed in Lemma \ref{lem:phi} will now give $\gamma_0=0$. More precisely, we now prove:
\begin{lemma} \label{eq:lemlim} The function $\phi_0$ satisfies:
$$\lim_{x\rightarrow -\infty} \phi_0(x) = 0.$$
\end{lemma}
\begin{proof}
Let $c_1=c_0/2>0$ and take $n$ large enough so that  $c_{b_n}\geq c_1$.

We recall that by Lemma \ref{lem:phi}  (see also the  proof of Lemma \ref{lem:c}) that the function
$$
\vphi(x)=\left\{
\begin{array}{ll}
\frac{1}{|x|^{2\alpha -1}} & x<-1 \\
1 & x>-1
\end{array}
\right.
$$
satisfies 
$$ (-\partial_{xx})^\alpha\vphi + K\vphi'  \geq 0 \qquad \mbox{ in } \{\vphi<1\}$$
for some $K$ large enough.
Introducing $\vphi_\eps(x) =\vphi(\eps x)$, we deduce
$$(-\partial_{xx})^\alpha\vphi_\eps + \eps^{2\alpha-1} K \vphi_\eps'(x)  \geq 0 \qquad \mbox{ in } \{\vphi_\eps(x)<1\}$$
and taking $\eps$ small enough (recalling that $2\alpha>1$), we get
$$ (-\partial_{xx})^\alpha\vphi_\eps + c_1 \vphi_\eps'(x)  \geq 0 \qquad \mbox{ in } \{\vphi_\eps<1\}.$$
Furthermore, $\vphi_\eps=1$ for $x\geq 0$, and so  by a sliding argument, we deduce $\phi_{b_n}(x) \leq \vphi_\eps(x)$ for all $n$ such that $c_{b_n}\geq c_1$ and thus
$$ \phi_0(x) \leq \vphi_\eps(x)$$
which implies in particular that $\gamma_0=0$.
\end{proof}

\bigskip

Finally, we conclude the proof of Proposition \ref{prop:limit} by proving that $\gamma_1=1$:
\begin{lemma} \label{eq:lemlim2} The function $\phi_0$ satisfies:
$$\lim_{x\rightarrow +\infty} \phi_0(x) = 1$$
\end{lemma}
\begin{proof}
We recall that (\ref{eq:intbeta}) implies that either $\gamma_1=\theta$ or $\gamma_1=1$ (otherwise the integral is infinite).
Furthermore, if $\gamma_1=\theta$, then $\phi_0\leq \theta$ on $\RR$ and so $\int_\RR f(\phi_0(x))\, dx =0$. Since  $\gamma_0 = 0 <\theta$, (\ref{eq:intbeta}) implies $c_0=0$, which is a contradiction. Hence $\gamma_1 =1$.
\end{proof}

\end{proof}

\medskip

\section{Asymptotic behavior}
We now prove Theorem \ref{thm:asymp}, which further characterizes the behavior of $\phi_0$ as $x\rightarrow -\infty$. We recall that in the case of the regular Laplacian ($\alpha=1$), $\phi_0$ and its derivatives decrease exponentially fast to $0$ as $x\rightarrow -\infty$. When $\alpha\in(1/2,1)$, it is readily seen that the  proof of Lemma \ref{eq:lemlim} actually implies: 
\begin{proposition}[Asymptotic behavior of $\phi_0$]\label{prop:asymp1}
There exists  $M$ such that
$$  \phi_0(x)\leq \frac{M}{|x|^{2\alpha-1}}\quad \mbox{ for } x\leq -1$$
\end{proposition}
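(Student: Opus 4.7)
The plan is to exploit directly the super-solution already built in the proof of Lemma \ref{eq:lemlim}, rather than constructing anything new. That proof produced, for $\eps>0$ sufficiently small, the function $\vphi_\eps(x)=\vphi(\eps x)$ with
$$\vphi_\eps(x)=\begin{cases}\dfrac{1}{|\eps x|^{2\alpha-1}}=\dfrac{\eps^{-(2\alpha-1)}}{|x|^{2\alpha-1}}, & \eps x<-1,\\[4pt] 1, & \eps x\geq -1,\end{cases}$$
and established by a sliding argument that $\phi_0(x)\leq \vphi_\eps(x)$ for every $x\in\RR$.

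First I would simply quote this pointwise comparison. For $x\leq -1/\eps$ we immediately obtain
$$\phi_0(x)\leq \frac{\eps^{-(2\alpha-1)}}{|x|^{2\alpha-1}},$$
which is already of the desired form with $M=\eps^{-(2\alpha-1)}$. Next, for the remaining bounded range $-1/\eps\leq x\leq -1$, I would not try to use the super-solution (which equals $1$ there) but rather the trivial bound $\phi_0(x)\leq 1$ together with $|x|^{2\alpha-1}\leq \eps^{-(2\alpha-1)}$, giving
$$\phi_0(x)\leq 1 \leq \frac{\eps^{-(2\alpha-1)}}{|x|^{2\alpha-1}}.$$
Combining the two regions and taking $M:=\eps^{-(2\alpha-1)}$ (with the value of $\eps$ fixed in the proof of Lemma \ref{eq:lemlim}, depending on $c_0$, $K$, and $\alpha$) yields the stated inequality for all $x\leq -1$.

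I do not expect any genuine obstacle here: the work was already done in Lemma \ref{eq:lemlim}, where the restrictions $2\alpha-1>0$ and $c_0>0$ (from Lemma \ref{lem:speed}) were precisely what allowed the scaling $\vphi_\eps(x)=\vphi(\eps x)$ to absorb the drift term and produce a super-solution. The only mild thing to check is the behaviour on the bounded interval, which is handled by the trivial bound. Thus the proposition is essentially a quantitative reading of what was shown in the proof of Lemma \ref{eq:lemlim}, with the explicit constant $M=\eps^{-(2\alpha-1)}$.
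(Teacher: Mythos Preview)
Your proposal is correct and is exactly the approach the paper intends: the paper gives no separate proof but simply states that the proposition ``is readily seen'' from the proof of Lemma~\ref{eq:lemlim}, and you have spelled out precisely that reading, extracting the bound $\phi_0\leq \vphi_\eps$ and converting it into the explicit estimate with $M=\eps^{-(2\alpha-1)}$.
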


\medskip

Noticing that $\phi'_0>0$ solves
$$
(-\partial_{xx})^\alpha\phi_0''+c_0(\phi_0')' = 0 \quad \mbox{ for } x\leq 0,
$$
we can also prove:
\begin{proposition}[Asymptotic behavior of $\phi_0'$]\label{prop:asymp2}
There exists a constant $m$ such that
$$ \phi_0'(x) \geq \frac{m}{|x|^{2\alpha}} \qquad \mbox{ for } x\leq -1.$$
\end{proposition}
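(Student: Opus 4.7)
The plan is to build a non-negative subsolution, effectively supported on a left tail, of the linear equation satisfied by $v:=\phi_0'$ below the set $\{\phi_0=\theta\}$, with the prescribed polynomial decay, and then to compare it with $v$ via the non-local maximum principle. Since $\phi_0$ is monotone with $\phi_0(0)=\theta$, we have $\phi_0<\theta$ and hence $f(\phi_0)\equiv 0$ on $(-\infty,0)$, so $\phi_0$ satisfies the linear equation $(-\partial_{xx})^\alpha\phi_0+c_0\phi_0'=0$ there. Since $(-\partial_{xx})^\alpha$ commutes with $\partial_x$, the function $v$ solves $(-\partial_{xx})^\alpha v+c_0 v'=0$ on $(-\infty,0)$, and the strong maximum principle (together with the non-constancy of $\phi_0$) gives $v>0$ on this half-line.

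For the subsolution, I would invoke Lemma \ref{lem:phi2} with $\beta=2\alpha\in(1,2)$ and rescale: set $w_\eps(x):=\bar\vphi(\eps x)$ for $\eps>0$ large, to be chosen. The scaling identities $(-\partial_{xx})^\alpha w_\eps(x)=\eps^{2\alpha}\bigl[(-\partial_{xx})^\alpha\bar\vphi\bigr](\eps x)$ and $w_\eps'(x)=\eps\bar\vphi'(\eps x)$, combined with the asymptotic expansion of Lemma \ref{lem:phi2}, give, as $\eps x\to-\infty$,
\begin{equation*}
(-\partial_{xx})^\alpha w_\eps+c_0 w_\eps' = \frac{1}{|x|^{2\alpha+1}}\left[-\frac{c_\alpha}{(2\alpha-1)\eps}+\frac{2\alpha c_0}{\eps^{2\alpha}}\right]+O\!\left(\frac{1}{\eps^{2\alpha}|x|^{4\alpha}}\right).
\end{equation*}
Since $2\alpha>1$, for $\eps$ large enough the first term in the bracket dominates the second and the bracket is negative; since $4\alpha>2\alpha+1$, the remainder is subdominant as soon as $\eps|x|\gg 1$. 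Fixing such an $\eps$ (with $\eps\geq 1$) and taking $A>1/\eps$ large enough, $w_\eps$ is a non-negative subsolution of $(-\partial_{xx})^\alpha u+c_0 u'\leq 0$ on $(-\infty,-A)$.

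To compare, I would pick $\eta>0$ so small that $v\geq\eta w_\eps$ on the complement $[-A,+\infty)$. This is possible because $w_\eps\equiv 0$ on $[-1/\eps,+\infty)$, while on the compact interval $[-A,-1/\eps]$ the function $v$ is bounded below by a strictly positive constant. The difference $V:=v-\eta w_\eps$ is then a supersolution of $(-\partial_{xx})^\alpha+c_0\partial_x$ on $(-\infty,-A)$, is non-negative on $[-A,+\infty)$, and tends to $0$ at $-\infty$ (both $v$ and $w_\eps$ do: the decay of $v$ follows from $\int_{-\infty}^0 v\,dx=\theta<\infty$ together with the uniform bound on $v'$ provided by the $\mathcal C^{2,\gamma}$ regularity of $\phi_0$). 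A negative infimum of $V$ would therefore be attained at some interior $x_0<-A$, at which $V'(x_0)=0$ while $(-\partial_{xx})^\alpha V(x_0)<0$, contradicting the supersolution property. Hence $V\geq 0$ on $\RR$, and consequently $\phi_0'(x)\geq \eta\eps^{-2\alpha}|x|^{-2\alpha}$ for $x\leq-1/\eps$. Since $\eps\geq 1$, this yields the claimed bound for $x\leq-1$ with $m:=\eta\eps^{-2\alpha}$.

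The main obstacle is the simultaneous calibration of $\eps$, $A$ and $\eta$ in the middle step: $\eps$ must be large enough so that the drift cannot overcome the fractional Laplacian of $|x|^{-2\alpha}$ (the hypothesis $\alpha>1/2$ is essential twice — to have $\beta=2\alpha>1$ so Lemma \ref{lem:phi2} yields the correct sign for $(-\partial_{xx})^\alpha|x|^{-2\alpha}$, and to have $4\alpha>2\alpha+1$ so the expansion's error term is strictly lower order), then $A$ must be large enough for this subordination to take effect, and finally $\eta$ must be small enough to secure the ordering on the matching region $[-A,-1/\eps]$. A secondary technical point is the application of the non-local comparison principle, which requires decay of $V$ at infinity — here furnished by the simultaneous decay of $v$ and $w_\eps$.
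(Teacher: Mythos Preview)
Your proof is correct and follows essentially the same route as the paper: construct a subsolution for $(-\partial_{xx})^\alpha + c_0\partial_x$ by rescaling the barrier $\bar\vphi$ of Lemma~\ref{lem:phi2} with $\beta=2\alpha$, fit it below $\phi_0'$ on the compact region where $\bar\vphi(\eps\,\cdot)>0$ and the subsolution inequality may fail, and conclude via the non-local comparison principle. Your version is in fact slightly more careful than the paper's own argument --- your scaling direction ($\eps$ large, so that the $\eps^{-1}$ term dominates the $\eps^{-2\alpha}$ drift) is the correct one, and you make explicit the decay of $\phi_0'$ at $-\infty$ needed for the infimum argument.
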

\begin{proof}
Lemma \ref{lem:phi2} implies that the function 
$$ \bar \vphi(x)=\left\{
\begin{array}{ll}
\frac{1}{|x|^{2\alpha}} & x<-1 \\
0 & x>-1
\end{array}
\right.
$$ 
satisfies
$$(-\partial_{xx})^\alpha\bar \vphi + c \bar \vphi'(x) =-\frac{c_{\alpha}}{2\alpha-1}\frac{1}{|x|^{2\alpha+1}}  + c  \frac{2\alpha}{|x|^{2\alpha+1}} + O\left(\frac{1}{|x|^{4\alpha}}\right)$$
when $x\rightarrow \infty$,
and so
$$(-\partial_{xx})^\alpha\bar \vphi + k \bar \vphi'(x) \leq 0 \quad \mbox{ for } x\leq - A$$
if $k$ is small enough and $A$ is large.

We introduce $\vphi_\eps(x)=\bar \vphi(\eps x)$, which satisfies
$$(-\partial_{xx})^\alpha\vphi_\eps + \eps^{1-2\alpha}k\vphi_\eps'  \leq 0 \quad\mbox{ for } x< - \eps^{-1}A $$
hence
$$(-\partial_{xx})^\alpha\vphi_\eps + c_0\vphi_\eps'  \leq 0 \quad\mbox{ for } x<-\eps^{-1}A $$
provided we choose $\eps$ small enough.

Finally, we take $r$ so that
$$ \phi_0'(x) \geq r\vphi_\eps(x) \qquad \mbox{ for } -\eps^{-1}A<x<-\eps^{-1}.$$
Proposition \ref{prop:asymp2} now follows from the maximum principle and a sliding argument using the fact that $\vphi_\eps(x)=0$ for $x\geq -\eps^{-1}$.
\end{proof}

\bigskip


\bibliographystyle{alpha}
\bibliography{biblio}

\end{document}